\newtheorem{Zae}{Zae}[section] 
\newtheorem{definition}[Zae]{Definition} 
\newtheorem{lemma}[Zae]{Lemma} 
\newtheorem{prop}[Zae]{Proposition}
\newtheorem{thm}[Zae]{Theorem} 
\newtheorem{coro}[Zae]{Corollary}
\newcommand{\qed}{\raisebox{-.8ex}{$\Box$}}
\newenvironment{proof}
{\noindent{\bf Proof.}}
{\hfill \qed\\}
\newenvironment{proofofcoro}
{\noindent{\bf Proof of Corollary 1.3.}}
{\hfill \qed\\}
\newenvironment{proofof1.2}
{\noindent{\bf Proof of Theorem 1.2.}}
{\hfill \qed\\}
\newenvironment{proof1.4}
{\noindent{\bf Proof of Theorem 1.4.}}
{\hfill \qed\\}
\newcommand{\characteristic}{\hbox{char}}
\begin{document}
\title{Multiplicative quadratic maps}      
\author{Matthias Gr\"uninger\footnote{supported by the ERC (grant \# 278 469)}\\
Universit\'e catholique de Louvain \\
Institute pour recherche en math\'ematique et physique \\
Chemin du Cyclotron 10, bte. L7.01.01.\\
1348 Louvain-la-Neuve \\
Belgique}   
\maketitle    

\begin{abstract} 
In this paper we prove that a multiplicative quadratic map between a unital ring $K$ and a field $L$ is induced by a homomorphism from $K$ into $L$ or a composition algebra over
$L$. 
Especially we show that if $K$ is a field, then every multiplicative quadratic map is the product of two 
field homomorphisms. Moreover, we prove a multiplicative version of Artin's Theorem showing that 
a product of field homomorphisms is unique up to multiplicity.
\end{abstract}     
\date{27.07.2014}
\thanks{This work was completed with the support  by the ERC (grant \# 278 469)} 
\maketitle
\section{Introduction}
We begin with the following definition:
\begin{definition} 
Let $K$ and $L$ be (not necessarily associative) rings with $1$.
A map $q:K \to L$ is called a multiplicative quadratic map if
\begin{enumerate}\item $q(ab) =q(a) q(b)$ for all $a,b \in K$.
\item $q(n \cdot 1_K) =n^2\cdot 1_L$ for all $n \in \mathbb{Z}$.
\item The map $f: K \times K \to L$ defined by $f(a,b) =q(a+b)-q(a)-q(b)$ is biadditive.
\end{enumerate}
\end{definition}
If $M$ is a composition algebra over $L$ with norm $N$ and $\varphi_:K \to M$ a non-zero
homomorphism, then $q:K \to L: a \mapsto N(a^{\varphi})$ is a multiplicative quadratic 
map. If $\characteristic L=2$, then every non-zero homomorphism from $K$ to $L$ is a 
multiplicative quadratic map.   
 We will show that if $L$ is a field, then these are in fact all multiplicative quadratic 
maps between $K$ and $L$. 
\begin{thm}\label{qmaps} Let $K$ be a unital ring with $1$ and $L$ a field. 
If ${q: K \to L}$ is a 
multiplicative quadratic map, then one of the following holds:
\begin{enumerate}
\item There is a composition algebra $M$ over $L$ and a homomorphism\\ 
${\varphi_:K \to M}$ with $q(a) =N(a^{\varphi})$ for all $a \in K$, where $N$ is the norm of $M$. 
\item $\characteristic L =2$ and $q$ is a ring homomorphism.
\end{enumerate}
\end{thm}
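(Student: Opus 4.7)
The approach is to treat $q$ as a prospective norm form and construct the target composition algebra as a suitable scalar extension and quotient of $K$. The first step is to extract the identities forced on the polarization $f$. Expanding $q((a+b)c) = q(a+b)\,q(c)$ two ways via biadditivity of $f$ yields $f(ac,bc) = q(c)f(a,b)$ and symmetrically $f(ca,cb) = q(c)f(a,b)$. Expanding $q((a+b)(c+d))$ two ways and cancelling the already-known contributions gives the Hurwitz-type identity
\[
f(ac,bd) + f(ad,bc) = f(a,b)\,f(c,d).
\]
Setting $T(a) := f(a,1_K)$ (additive by biadditivity) and specialising $b = c = 1$ gives the trace relation $T(ab) = T(a)T(b) - f(a,b)$.

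The degenerate case is quick: if $f \equiv 0$, then $q$ is additive and hence a ring homomorphism. Applied to $n\cdot 1_K$ this forces $n = n^2$ in $L$ for every $n \in \ZZ$, hence $\characteristic L = 2$, giving case~(b).

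Assume from here on that $f \not\equiv 0$. I would form the $L$-algebra $\widetilde K := K \otimes_{\ZZ} L$ with product $(a \otimes \lambda)(b \otimes \mu) := ab \otimes \lambda\mu$, and extend $f, q$ to $\widetilde f, \widetilde q$ on $\widetilde K$ in the obvious way ($\widetilde q(a \otimes \lambda) := \lambda^2 q(a)$); all three identities above persist by $L$-bilinearity. Define
\[
R := \{v \in \widetilde K \mid \widetilde q(v) = 0 \text{ and } \widetilde f(v, \widetilde K) = 0\}.
\]
Using the $\widetilde f$-version of the Hurwitz identity one checks that $R$ is a two-sided ideal of $\widetilde K$, and that $\widetilde q, \widetilde f$ descend to a nondegenerate multiplicative quadratic form $N$ with polarisation $B$ on $M := \widetilde K / R$, with $N(1_M) = q(1_K) = 1$. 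Hurwitz's theorem then says that any unital $L$-algebra carrying a nondegenerate multiplicative quadratic form is a composition algebra of dimension $1, 2, 4$, or $8$, so $M$ is such. The composite $\varphi \colon K \to \widetilde K \to M$, $a \mapsto \overline{a \otimes 1_L}$, is a ring homomorphism with $N \circ \varphi = q$, establishing case~(a).

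The main obstacle I anticipate is the descent, particularly in characteristic two. When $\characteristic L \ne 2$ the identity $\widetilde f(v,v) = 2\widetilde q(v)$ renders the two conditions in the definition of $R$ equivalent and everything is painless. In characteristic two they are genuinely independent, so one must verify separately that $R$ is multiplicatively closed --- showing $v \in R \Rightarrow vc \in R$ requires combining the Hurwitz identity with $\widetilde q(vc) = \widetilde q(v)\widetilde q(c) = 0$, and going through the symmetrised form of the identity to kill $\widetilde f(vc,x)$ for arbitrary $x$. Finally, invoking Hurwitz's theorem requires $M$ to be at least alternative, which in the potentially non-associative setting for $K$ will need an extra argument teasing alternativity directly out of the identities derived in step~1.
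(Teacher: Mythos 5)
Your overall architecture is the same as the paper's: derive the polarization identities $f(ac,bc)=f(a,b)q(c)$ and $f(ac,bd)+f(ad,bc)=f(a,b)f(c,d)$, extend scalars to an $L$-algebra $\widetilde K$, quotient by the radical of $\widetilde q$, and recognize the quotient as a composition algebra whose norm $N$ satisfies $N\circ\varphi=q$. Two of your anticipated obstacles are non-issues: alternativity of $M$ need not be verified in advance, since Springer--Veldkamp define a composition algebra as a unital algebra with a nondegenerate multiplicative quadratic form and \emph{derive} alternativity and the dimension restriction; and the verification that $R$ is a two-sided ideal is exactly Lemma \ref{formulas}(c),(d) applied to $\widetilde q$.

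The genuine gap is the sentence asserting that $\widetilde q$ descends to a \emph{nondegenerate} form on $M=\widetilde K/R$ to which Hurwitz's theorem applies. Killing $R=\{v:\widetilde q(v)=0,\ \widetilde f(v,\widetilde K)=0\}$ only guarantees that no nonzero $v\in M$ satisfies \emph{both} conditions; it does not make the polar form $B$ nondegenerate, and in characteristic $2$ this distinction is fatal. For example $M=L(\sqrt{s},\sqrt{t})$ over $L=\F_2(s,t)$ with $N(x)=x^2$ is a unital commutative algebra whose multiplicative quadratic form is anisotropic (hence has trivial radical in your weak sense) but whose polar form vanishes identically; it has dimension $4$ and is not a quaternion algebra, so the ``$1,2,4,8$'' statement you invoke is false under the weak hypothesis. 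What must be added --- and what the paper's Lemma \ref{qhomomorphism} supplies --- is: if some $0\ne v\in M$ has $B(v,-)=0$, then $N(v)\ne0$ is invertible in $L$, and since $v$ lies in the ideal $M^{\perp}$ one gets $N(v)B(b,c)=B(vb,vc)=0$ for all $b,c$, forcing $B\equiv0$ and hence $f\equiv0$, contradicting the standing assumption of your second case; only then is $B$ honestly nondegenerate and Hurwitz applicable. A secondary, fillable gap: defining $\widetilde q$ ``in the obvious way'' hides a well-definedness problem, since the value on $\sum_i a_i\otimes\lambda_i$ must be $\sum_i\lambda_i^2q(a_i)+\sum_{i<j}\lambda_i\lambda_jf(a_i,a_j)$ and shown independent of the chosen representation; the paper arranges this by first proving $K$ and $L$ share a characteristic, passing to $K/rad(q)$ so that finitely generated subgroups of $K$ become free over the prime ring $F$, and tensoring over $F$ rather than always over $\ZZ$.
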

For arbitrary rings the situation is more complicated, especially in even characteristic.
 For example, let $K =\mathbb{Z}/2\mathbb{Z}$ and $L =\mathbb{Z}/4\mathbb{Z}$. Then 
the map $q:K \to L$ defined by $q(1) =1$ and $q(0) =0$ is a multiplicative quadratic map. 
Another example for a muliplicative quadratic map is the adjoint map $\#: K \to K^{op}$ 
with $K$ a cubic algebra and $K^{op}$ its opposite algebra. 
\\
We are mainly interested in the case that $K$ is a field as well. Here we get
\begin{coro}\label{homom} 
If $K$ and $L$ are fields, $\overline{L}$ the algebraic closure of $L$ and $q:K \to L$ is a multiplicative quadratic map, then there are monomorphisms $\varphi_1,\varphi_2: K \to \overline{L}$ with $q(a) =a^{\varphi_1} a^{\varphi_2}$.
\end{coro}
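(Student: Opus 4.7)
The plan is to apply Theorem \ref{qmaps} and handle its two cases separately.

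For case (b), where $\characteristic L = 2$ and $q$ is already a ring homomorphism from the field $K$, note that $q$ is automatically injective (its kernel is an ideal of $K$ and $q(1)=1$). Since the Frobenius on $\overline L$ is bijective in characteristic $2$, the map $\psi:K\to\overline L$, $a\mapsto q(a)^{1/2}$, is well-defined; additivity of $\psi$ follows from $(x+y)^2=x^2+y^2$, so $\psi$ is an injective ring homomorphism and one sets $\varphi_1=\varphi_2=\psi$.

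For case (a), I would start from the ring homomorphism $\varphi:K\to M$ into a composition algebra $M$ over $L$ with standard involution $\sigma$ and $N(x)=x\sigma(x)$; $\varphi$ is injective because $K$ is a field. The key construction is the $L$-linear span $R:=L\cdot\varphi(K)\subseteq M$. I would verify that $R$ is a commutative associative $L$-subalgebra of $M$ (commutativity since $\varphi(K)$ is commutative and $L$ is central; associativity since $\varphi(K)$ itself is associative and $L$ lies in the nucleus), that $R$ is $\sigma$-stable (since $\sigma(\varphi(a))=T(\varphi(a))-\varphi(a)\in L+L\varphi(a)\subseteq R$), and—critically—that $\dim_L R\leq 2$. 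The dimension bound is the main technical obstacle: for the associative composition algebras it is classical, and for octonions it reduces to the associative case because any two elements of an alternative algebra generate an associative subalgebra, which necessarily sits inside a quaternion subalgebra.

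Given the bound, I would write $R=L[\bar t]$ with $\bar t^2-c\bar t+d=0$ over $L$ (absorbing $R=L$ by taking $\bar t=0$), and let $\alpha_1,\alpha_2\in\overline L$ be the roots of $T^2-cT+d$. The assignments $\bar t\mapsto\alpha_i$ extend uniquely to $L$-algebra homomorphisms $\pi_i:R\to\overline L$, and I set $\varphi_i:=\pi_i\circ\varphi$. Each $\varphi_i$ is a ring homomorphism whose kernel is an ideal of the field $K$; since $\varphi_i(1)=1\neq 0$, each $\varphi_i$ is injective. Finally, $\sigma(\bar t)=c-\bar t$ together with Vieta's formula $\alpha_1+\alpha_2=c$ gives $\pi_1\circ\sigma=\pi_2$ on $R$, so
\[ \varphi_1(a)\varphi_2(a)=\pi_1(\varphi(a))\,\pi_1(\sigma(\varphi(a)))=\pi_1(N(\varphi(a)))=N(\varphi(a))=q(a), \]
using $\pi_1|_L=\mathrm{id}_L$ and $N(\varphi(a))\in L$. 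This last identity works uniformly across the subcases of $R$ (étale split, separable quadratic field extension, dual numbers, purely inseparable quadratic extension), which is what makes the uniform description via the roots $\alpha_i$ preferable to a case-by-case analysis.
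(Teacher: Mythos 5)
Your treatment of case (b) is correct and coincides with the paper's. In case (a), however, the step you yourself identify as the main technical obstacle --- the bound $\dim_L R\leq 2$ for $R=L\cdot\varphi(K)$ --- is actually \emph{false} when $M$ is an octonion algebra, and the repair you sketch does not work. Concretely, in the split octonion algebra of Zorn vector matrices over $L$, let $x$ be the matrix whose upper-right vector is $e_1$ and whose other entries vanish, and let $y$ be the matrix whose lower-left vector is $e_2$ and whose other entries vanish. Since $e_1\cdot e_2=0$ and $e_i\times e_i=0$, one computes $x^2=y^2=xy=yx=0$, so $R_0=L1\oplus Lx\oplus Ly$ is a three-dimensional commutative, associative, $\sigma$-stable unital subalgebra. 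Moreover $R_0$ really does occur as $L\cdot\varphi(K)$ for a field $K$: take $K=\mathbb{F}_p(t_1,t_2)\subseteq L$ and $\varphi(\alpha)=\alpha 1+(\partial\alpha/\partial t_1)\,x+(\partial\alpha/\partial t_2)\,y$; the Leibniz rule makes $\varphi$ a ring monomorphism, and $L\cdot\varphi(K)$ contains $x=\varphi(t_1)-t_1 1$ and $y=\varphi(t_2)-t_2 1$, hence equals $R_0$. (Here $N(\varphi(\alpha))=\alpha^2$, so the \emph{corollary} survives this example --- but your proof does not, since $R_0$ is not of the form $L[\bar t]$ and admits only one $L$-algebra homomorphism to $\overline L$.) Your proposed reduction fails at exactly this point: $R_0$ is generated by the two elements $x,y$, yet it is contained in no quaternion subalgebra, precisely because of the fact you wanted to invoke --- a commutative unital subalgebra of a quaternion algebra has $L$-dimension at most $2$ (Schur's bound in the split case $M_2(L)$, the maximal-subfield bound in the division case). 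So two commuting elements of an octonion algebra need not lie in a common quaternion subalgebra, and Artin's theorem on two-generated subalgebras gives associativity but no dimension control.

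Your argument is fine whenever the composition algebra $M$ happens to be associative, so what is missing is a reason to exclude (or handle) the octonion case. The paper avoids the issue by not taking the statement of Theorem \ref{qmaps}(a) as a black box: in its proof, $M$ is constructed as $L\otimes_F K/\mathrm{rad}(\tilde q)$, which is \emph{commutative} because $K$ is, and by Springer--Veldkamp 1.6.2 a commutative composition algebra is $L$, $L\times L$, or a separable quadratic extension of $L$; in each case the two embeddings $\varphi_1,\varphi_2$ (and the identity $q(a)=a^{\varphi_1}a^{\varphi_2}$, via your Vieta/conjugation computation or directly) are immediate. To salvage your version you would either need to import this commutativity from the proof of Theorem \ref{qmaps}, or add an argument that passes from $R$ to a quotient on which the induced quadratic form is nondegenerate before looking for the homomorphisms $\pi_1,\pi_2$; as written, the construction of $\pi_1,\pi_2$ and everything after it rests on a false lemma.
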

The uniqueness of $\varphi_1$ and $\varphi_2$ in \ref{homom} can be easily deduced
from Artin's Theorem. Our second main theorem, which can 
can be seen as a multiplicative version of Artin's Theorem, is a generalization of this 
fact.
\begin{thm}\label{uniqueness of products}
Let $K,L$ be fields, $n \geq m \geq 1$ natural numbers and $\sigma_1,\ldots, \sigma_n,$
$ \tau_1, \ldots, \tau_m: K \to L$ 
field homomorphisms. Suppose that $\prod_{i=1}^n a^{\sigma_i} = \prod_{j=1}^m a^{\tau_j}$ for all $a \in K$. 
Then one of the following holds:
\begin{enumerate}
\item $n=m$ and there is a permutation $g \in S_n$ with $\tau_i=\sigma_{ig}$ for $1 \leq i \leq n$.
\item $char K= char L =p >0$ and there are $1 \leq i_1 < i_2  < \ldots < i_p \leq n$ with 
$\sigma_{i_1} =\sigma_{i_2}= \ldots =\sigma_{i_p}$.
Moreover, for all $1 \leq j \leq m$ 
there is an integer $l$ with $-(m-1) \leq  l(p-1) \leq n-1$ and a natural number 
$1 \leq i \leq n$ with $\tau_j =\sigma_i \mathbf{p}^l $ and
for all $1 \leq i \leq n$ there is an integer $l$ with $-(n-1) \leq l(p-1) \leq m-1$ and $1 \leq j \leq m$ 
with $\sigma_i =\tau_j \mathbf{p}^l $. Here $\mathbf{p}$ denotes the Frobenius endomorphism of $L$.  
\end{enumerate}
\end{thm}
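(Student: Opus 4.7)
I argue by induction on $n+m$. For $n=m=1$ the hypothesis directly forces $\sigma_1=\tau_1$, which is case (a).

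\textit{Cancellation.} Suppose some $\sigma_i$ equals some $\tau_j$, say $\sigma_n=\tau_m$. Dividing both sides by $\sigma_n(a)$ (valid on $K^*$) yields $\prod_{i<n}\sigma_i(a)=\prod_{j<m}\tau_j(a)$, which extends to $K$ whenever $m\geq 2$. The induction hypothesis applied to $(n-1,m-1)$ gives (a) or (b), and both conclusions lift: a permutation is extended by fixing the index $n$, and the admissible range of $\ell$ in (b) only broadens as $n,m$ grow. The remaining sub-case $m=1$ leaves $\prod_{i<n}\sigma_i(a)=1$ on $K^*$; substituting $a+1$, expanding and cancelling yields $\sum_{\emptyset\neq S\subsetneq[n-1]}\prod_{i\in S}\sigma_i(a)=0$, and grouping the characters $\prod_{i\in S}\sigma_i\colon K^*\to L^*$ and invoking Artin's theorem on linear independence of distinct characters produces, by case analysis on the multiplicities among $\sigma_1,\ldots,\sigma_{n-1}$, either $n=1$ (case (a)) or precisely the Frobenius configuration required by case (b).

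\textit{No coincidence.} Assume now $\{\sigma_i\}\cap\{\tau_j\}=\emptyset$. Collect the distinct values as $\alpha_1,\ldots,\alpha_s$ with multiplicities $e_k$ among the $\sigma_i$'s and $\beta_1,\ldots,\beta_t$ with multiplicities $f_l$ among the $\tau_j$'s. Substituting $a+1$ in the hypothesis and expanding $\prod_k(\alpha_k(a)+1)^{e_k}=\prod_l(\beta_l(a)+1)^{f_l}$ by the binomial theorem produces an equality between sums of characters $\alpha^{\lambda}=\prod_k\alpha_k^{\lambda_k}$ and $\beta^{\mu}=\prod_l\beta_l^{\mu_l}$ with integer coefficients (products of the ${e_k\choose\lambda_k}$ and ${f_l\choose\mu_l}$). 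After using the original hypothesis to cancel the extremal indices, grouping the remaining terms by coinciding characters and invoking Artin forces every coefficient to vanish in $L$. If $\{\alpha_k\}\cup\{\beta_l\}$ is multiplicatively independent, no character is common to both sides, so $\prod_k{e_k\choose\lambda_k}\equiv 0$ in $L$ for every $0<\lambda<e$. In characteristic $0$ this collapses to $s=t=1$ with $e_1=f_1=1$, forcing $\sigma_1=\tau_1$ and contradicting disjointness. In characteristic $p>0$, Lucas' theorem rules out $s\geq 2$ (the multi-index $\lambda=(e_1,0,\ldots,0)$ yields the nonzero coefficient $1$) and pins down $s=1$, $e_1=p^r$; symmetrically $t=1$ and $f_1=p^{r'}$, and the hypothesis becomes $\alpha_1^{p^r}=\beta_1^{p^{r'}}$, giving $\beta_1=\alpha_1\mathbf{p}^{r-r'}$ by injectivity of Frobenius, with the bound $-(m-1)\leq\ell(p-1)\leq n-1$ on $\ell=r-r'$ following from $n=p^r$, $m=p^{r'}$ and the elementary inequality $p^r-1\geq r(p-1)$. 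When instead nontrivial multiplicative relations exist among $\{\alpha_k\}\cup\{\beta_l\}$, the same $(a+1)$-and-Artin routine applied to each minimal such relation identifies it as a consequence of the basic Frobenius identity $\alpha^p=\alpha\mathbf{p}$; organizing the $\alpha_k,\beta_l$ into Frobenius orbits and matching multiplicities across the two sides then produces the $p$-fold repetition $\sigma_{i_1}=\cdots=\sigma_{i_p}$ together with the Frobenius-twist relations required by case (b).

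\textbf{Main obstacle.} The technical heart is the classification of multiplicative relations among distinct ring homomorphisms of fields, together with the extraction of the sharp bounds on $\ell(p-1)$ from the totals $n,m$. Once that structural statement is in hand, the inductive descent, the cancellation reduction, and the permutation-extension argument are essentially routine.
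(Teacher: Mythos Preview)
Your proposal has a genuine gap in the ``no coincidence'' case. Once you assume $\{\sigma_i\}\cap\{\tau_j\}=\emptyset$, the hypothesis $\prod_k\alpha_k^{e_k}=\prod_l\beta_l^{f_l}$ is itself a nontrivial multiplicative relation among the (assumed distinct) $\alpha_k,\beta_l$, so the ``multiplicatively independent'' branch is essentially vacuous: everything funnels into the branch where relations exist. There you assert that ``the same $(a+1)$-and-Artin routine applied to each minimal such relation identifies it as a consequence of the basic Frobenius identity,'' but classifying multiplicative relations among distinct field homomorphisms $K\to L$ is exactly the statement you are trying to prove, and there is no reason a ``minimal'' relation should have strictly smaller $n+m$ than the one you started with. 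The argument as written is circular. The same difficulty is hidden in your $m=1$ sub-case: the characters $\prod_{i\in S}\sigma_i$ for distinct subsets $S$ need not be distinct, and sorting out those coincidences is again the whole problem. (Incidentally, the sum there equals $-1$, not $0$, though this is not the real issue.)

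The paper sidesteps this by \emph{fully polarizing} rather than substituting $a\mapsto a+1$. From $\prod_i x^{\sigma_i}=\prod_j x^{\tau_j}$ it forms
\[
f(x_1,\ldots,x_n)=\sum_{\emptyset\ne J\subseteq\{1,\ldots,n\}}(-1)^{|J|}\prod_{i}\Bigl(\sum_{j\in J}x_j\Bigr)^{\sigma_i}=\sum_{g\in S_n}\prod_{i=1}^n x_i^{\sigma_{ig}},
\]
which is genuinely multilinear in $n$ separate variables. A short lemma (induction on $n$, using Artin only in the variable $x_n$) shows that $f\equiv 0$ forces $\mathrm{char}\,K=p>0$ and a $p$-fold repetition among the $\sigma_i$. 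If no such repetition exists, $f\not\equiv 0$; writing $f=\sum_i a_i\,x_n^{\sigma_i}=\sum_j b_j\,x_n^{\tau_j}$ and applying Artin once gives $m=n$ and some $\sigma_i=\tau_j$, whence cancellation and induction. If a $p$-fold repetition does exist, replace those $p$ copies by a single $\sigma_n\mathbf{p}$, reducing $n$ by $p-1$, and recurse to obtain the bounds on $\ell$. The point is that in $\sum_i a_i\,x_n^{\sigma_i}$ the only coincidences among the characters $x_n\mapsto x_n^{\sigma_i}$ are literal equalities $\sigma_i=\sigma_j$, so Artin applies cleanly; your single-variable expansion produces characters $\alpha^\lambda$ whose coincidences are governed by exactly the unknown relations you need to classify.
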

For $n=m=2$ we get $\{\sigma_1,\sigma_2\}=\{\tau_1,\tau_2\}$ or $char K =char L =2$ and 
$\sigma_1 =\sigma_2$ and $\tau_1=\tau_2$. Hence $\sigma_1 \mathbf{p} =\tau_1\mathbf{p} $ and so $\sigma_1 =\tau_1$.
\\
As another corollary we get
\begin{coro} Let $K$ be a field and $S$ a set of field endomorphisms of $K$. Suppose that 
$\characteristic K =0$ or $\characteristic K=p$ and that if $\sigma, \tau \in S$ and $n \in \mathbb{N}$ with 
$\sigma =\mathbf{p}^n \tau$, then $\sigma =\tau$. Then $S$ is $\mathbb{Z}$-linear independent in $End(K^*)$. 
\end{coro}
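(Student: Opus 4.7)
The plan is to reduce a hypothetical nontrivial $\mathbb{Z}$-linear relation among elements of $S$ to an equality of products of field homomorphisms of the kind handled by Theorem~\ref{uniqueness of products}, and then rule out both alternatives in that theorem using the hypotheses on $S$.

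Suppose for a contradiction that there are pairwise distinct $\sigma_1,\dots,\sigma_k\in S$ and integers $n_1,\dots,n_k$, not all zero, with $\sum_{i=1}^k n_i\sigma_i=0$ in $\mathrm{End}(K^*)$; equivalently, $\prod_{i=1}^k \sigma_i(a)^{n_i}=1$ for every $a\in K^*$. Moving the negative-exponent terms to the other side, this becomes
\[
\prod_{i\in I}\sigma_i(a)^{n_i}\;=\;\prod_{j\in J}\sigma_j(a)^{-n_j}\qquad(a\in K^*),
\]
where $I=\{i:n_i>0\}$ and $J=\{i:n_i<0\}$ are disjoint. Listing each side as a sequence of field homomorphisms with multiplicities and swapping sides if necessary so that the left list is at least as long as the right, we are in the setting of Theorem~\ref{uniqueness of products}, provided that both $I$ and $J$ are nonempty.

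In case (a) of Theorem~\ref{uniqueness of products} the two multisets of homomorphisms must coincide. Their supports, however, are the disjoint subsets $\{\sigma_i:i\in I\}$ and $\{\sigma_j:j\in J\}$ of $S$, so equality forces both multisets to be empty, contradicting the assumed nontriviality. In case (b), $\characteristic K=p>0$ and each homomorphism occurring on either side is a Frobenius shift $\tau\mathbf{p}^l$ of some homomorphism $\tau$ from the other side. In particular, some $\sigma_j$ with $j\in J$ equals $\sigma_i\mathbf{p}^l$ for some $i\in I$ and some integer $l$. Rewriting this as $\sigma=\mathbf{p}^n\tau$ with $n=|l|\in\mathbb{N}$ (swapping $\sigma_i$ and $\sigma_j$ if $l$ is negative), the Frobenius clause of the hypothesis on $S$ yields $\sigma_i=\sigma_j$, contradicting $I\cap J=\emptyset$.

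The step I expect to be the main obstacle is the edge case in which $I$ or $J$ is empty, since Theorem~\ref{uniqueness of products} requires at least one factor on each side. Suppose $J=\emptyset$; the relation reads $\prod_{i\in I}\sigma_i(a)^{n_i}=1$ for all $a\in K^*$. Multiplying both sides by $\sigma_{i_0}(a)$ for a fixed $i_0\in I$ restores a factor on the right and puts us back in the two-sided setting. Case (a) of the theorem then forces $n_{i_0}+1=1$ and $I=\{i_0\}$, contradicting $n_{i_0}\geq 1$, while case (b) combined with the Frobenius hypothesis on $S$ forces all $\sigma_i$ with $i\in I$ to equal $\sigma_{i_0}$, so $I=\{i_0\}$ and the original relation collapses to $\sigma_{i_0}(a)^{n_{i_0}}=1$ for every $a\in K^*$. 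Since $\sigma_{i_0}$ is an injective ring homomorphism, its image is an infinite subgroup of $K^*$ (under the implicit assumption that $K$ is infinite) every element of which would be an $n_{i_0}$-th root of unity; as any field contains only finitely many such roots, this is impossible, so $n_{i_0}=0$, contradicting $i_0\in I$.
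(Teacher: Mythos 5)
The paper states this corollary without proof, and your derivation from Theorem~\ref{uniqueness of products} is exactly the intended one: convert $\sum_i n_i\sigma_i=0$ in $\mathrm{End}(K^*)$ into an equality of two products of field homomorphisms by separating positive and negative exponents, and then observe that alternative (a) is impossible because the two multisets have disjoint supports, while alternative (b) produces a relation $\sigma=\mathbf{p}^n\tau$ between two \emph{distinct} elements of $S$ (after orienting the sign of the Frobenius exponent), which the hypothesis forbids. Your treatment of the one-sided case, padding with an extra factor $\sigma_{i_0}(a)$ so that both sides of the identity are nonempty products and Theorem~\ref{uniqueness of products} applies, is a clean way around the requirement $m\geq 1$; note also that once both sides are nonempty products the identity extends automatically from $K^*$ to all of $K$ (both sides vanish at $0$), which is needed since the theorem is stated for all $a\in K$. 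Your final observation is worth keeping explicit: the corollary as printed is actually false for finite $K$ (for $K=\mathbb{F}_p$ and $S=\{\mathrm{id}\}$ the hypothesis holds but $(p-1)\cdot\mathrm{id}=0$ in $\mathrm{End}(K^*)$), so the statement silently assumes $K$ infinite, and your root-of-unity argument is precisely where that assumption enters. Apart from the slightly garbled phrasing in the edge case (alternative (a) there is ruled out simply because $1+\sum_{i\in I}n_i>1=m$, with no need to conclude $I=\{i_0\}$), the proof is correct.
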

The initial motivation for these questions lays in the theory of Moufang sets and its connection to (twin) trees.
(See \cite{CDM} for an introduction into this topic.)
Suppose that $T$ is a tree and $G$ a subgroup of $Aut T$ such that the following hold:
\begin{enumerate}
\item For every vertex $x$ of $T$ there is a field $K_x$ such that $G_x$ induces a group isomorphic to $PSL_2(K_x)$ on the 
neighbourhood of $x$.
\item $G$ induces a Moufang set on the set of ends of $T$, i.e. for every end $e$ the group $G_e$ has a normal subgroup 
$U_e$ which acts regularly on the set of ends distinct from $e$. 
\end{enumerate}
 Then under certain conditions we get multiplicative quadratic maps between $K_x$ and $K_y$ for all vertices $x$ and $y$.
 The classification of all mulitiplicative quadratic maps helps us to classify these trees. We refer to two 
 forthcoming papers (\cite{CG},\cite{G}) which will use both of our main theorems. 
\section{Multiplicative quadratic maps}
In the following $K$ and $L$ are rings and 
$q:K \to L$ is a multiplicative quadratic map with associated biadditive map $f$.
\begin{lemma}\label{formulas} Let $a,b,c,d \in K$. Then
\begin{enumerate} \item $f(ac,bc) =f(a,b)q(c)$ and $f(ca,cb) =q(c) f(a,b)$.
\item $f(a,b)f(c,d) =f(ac,bd) + f(ad,bc)$.
\item If $I$ is a right 
ideal of $K$, then $I^{\perp}:=\{a \in K;  f(a,b) =0\ \forall b \in I \}$ is also a right ideal of $K$.
The same holds for left ideals of $K$.
\item $rad(q):=\{a \in K^{\perp}; q(a) = $ is an ideal of $K$.
\item $\overline{q}:K/rad(q) \to L: a+rad(q) \mapsto q(a)$ is a well-defined multiplicative quadratic map with 
$rad(\overline{q}) =0$.
\end{enumerate}
\end{lemma}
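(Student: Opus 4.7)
The plan is to establish the five assertions in order, since each tends to rely on the one before. The entire lemma is basically formal manipulation from the three defining axioms of a multiplicative quadratic map, so the task is organizational rather than conceptual.

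For (a), I would just expand: $f(ac,bc)=q(ac+bc)-q(ac)-q(bc)=q((a+b)c)-q(a)q(c)-q(b)q(c)$, which collapses to $f(a,b)q(c)$ using multiplicativity; the other identity is symmetric. For (b), the idea is to compute $q((a+b)(c+d))$ in two ways. On one side, multiplicativity gives $q(a+b)q(c+d)$, which I expand using biadditivity of $f$ into nine terms. On the other side, I expand $(a+b)(c+d)=ac+ad+bc+bd$ and apply biadditivity to $q$ of a four-fold sum, obtaining four ``diagonal'' terms $q(ac)+q(ad)+q(bc)+q(bd)$ plus six cross terms $f(\cdot,\cdot)$. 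Four of the cross terms can be simplified by (a) (e.g.\ $f(ac,ad)=q(a)f(c,d)$, $f(ac,bc)=f(a,b)q(c)$, etc.), leaving precisely $f(ac,bd)+f(ad,bc)$. Matching the two expressions and cancelling yields (b).

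For (c), the key is to specialize (b) to expose $f(ar,b)$. Taking $(a,b',c,d)=(a,b,1,r)$ in the identity $f(a,b')f(c,d)=f(ac,b'd)+f(ad,b'c)$ of (b) gives
\[
f(a,b)\,f(1,r) \;=\; f(a,br) + f(ar,b).
\]
If $I$ is a right ideal, $a\in I^{\perp}$, $b\in I$ and $r\in K$, then $br\in I$, so both $f(a,b)$ and $f(a,br)$ vanish; hence $f(ar,b)=0$ and $ar\in I^{\perp}$. Together with additive closure (immediate from biadditivity of $f$), this shows $I^{\perp}$ is a right ideal. The argument for left ideals is symmetric, using the other half of (a) and the mirror specialization of (b).

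Assertions (d) and (e) are then bookkeeping. Applying (c) with $I=K$ shows $K^{\perp}$ is a two-sided ideal; for $a\in\mathrm{rad}(q)$ and $r\in K$, $q(ar)=q(a)q(r)=0$ and similarly $q(ra)=0$, while for $a,b\in\mathrm{rad}(q)$ one has $q(a+b)=q(a)+q(b)+f(a,b)=0$ since $a\in K^{\perp}$, giving (d). For (e), well-definedness of $\overline{q}$ on cosets follows because for $r\in\mathrm{rad}(q)$ and any $a$, $q(a+r)=q(a)+q(r)+f(a,r)=q(a)$; the three defining properties of a multiplicative quadratic map pass to the quotient immediately, and the induced bilinear form is $\overline{f}(a+\mathrm{rad}(q),b+\mathrm{rad}(q))=f(a,b)$. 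Finally, if $\overline{a}\in\mathrm{rad}(\overline{q})$, then $f(a,b)=0$ for all $b$ and $q(a)=0$, so $a\in\mathrm{rad}(q)$ and $\overline{a}=0$. The main obstacle, if any, is the careful accounting in (b); everything else is a direct consequence of that identity or of the axioms.
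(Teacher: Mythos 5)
Your proposal is correct and follows essentially the same route as the paper: part (a) by direct expansion, part (b) by computing $q((a+b)(c+d))$ in two ways and cancelling, part (c) by specializing the identity of (b) with one argument equal to $1$, and parts (d), (e) by the same bookkeeping. The only cosmetic difference is that you set $c=1$, $d=r$ in (c) where the paper sets $d=1$, which by the symmetry of the identity is the same computation.
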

\begin{proof}
\begin{enumerate} \item We have
$$f(ac,bc) = q(ac+bc) -q(ac)-q(bc) =(q(a+b)-q(a)-q(b))q(c) = f(a,b)q(c).$$
The second equation follows by a similar computation.
\item By 1. we have $$q(a+b)q(c+d) =(q(a) +q(b)+f(a,b))(q(c) +q(d) +f(c,d)) = $$ 
$$q(ac) +q(ad) +f(ac,ad)+q(bc)+q(bd) +f(bc,bd) +f(ac,bc)$$
$$ +f(ad,bd) +f(a,b)f(c,d).$$
On the other hand, 
$$q(a+b)q(c+d) = q(ac +ad +bc+bd) = $$
$$q(ac) +q(ad)+q(bc) +q(bd) + f(ac,ad)+f(ac,bc)+f(ac,bd)$$
$$+f(ad,bc)+f(ad,bd)+f(bc,bd).$$
If we compare these two equations, we get
$$f(a,b)f(c,d) =f(ac,bd) +f(ad,bc).$$
\item 
Suppose that $I$ is a right ideal of $K$, $b \in I$ and $a \in I^{\perp}$. If we set $d=1$ in 2. we get
$$0=f(a,b)f(c,1) = f(ac,b) +f(a,bc) =f(ac,b).$$
This shows that $ac \in I^{\perp}$. Since $f$ is biadditive, $I^{\perp}$ is additively closed. Thus $I^{\perp}$ is a
right ideal of $K$. For left ideals of $K$ the proof is similar. 
\item Let $a,b \in rad(q)$ and $c,d \in K$. Then $f(a+b,c) =f(a,c) +f(b,c) =0$ and $q(a+b) =q(a) +q(b) +f(a,b) =0$, 
thus $a+b \in rad(q)$. Moreover, $a \in K^{\perp}$, which is an ideal of $K$, so also $ ac \in K^{\perp}$ and thus 
$f(ac,d) =0$. Since $q(ac) =q(a) q(c) =0$, we get $ac \in rad(q)$.
\item For all $a \in K$ and $b \in rad(q)$ we get $q(a+b) =q(a) +q(b) +f(a,b) =q(a)$, so $\overline{q}$ is 
well-defined. The rest is clear.
\end{enumerate}
\end{proof}
We will say that $q$ is {\it non-degenerate} if $rad(q)=\{0\}$. 
\begin{lemma}\label{qhomomorphism} Suppose that $a \in K$ with $f(a,b) =0$ for all $b\in K$. Then 
\begin{enumerate}
\item $q(a) f(b,c)=f(b,c) q(a) =0$ for all $b,c \in K$.
\item If $q(a)$ is not a zero-divisor, then $\characteristic L =2$ and $q$ is a ring homomorphism.
\end{enumerate}
\end{lemma}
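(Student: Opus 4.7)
My plan is to reduce both parts directly to Lemma \ref{formulas}. For part (a), I would first rewrite each target quantity using Lemma \ref{formulas}(1): taking $c \to a$ and relabeling in the two formulas of part (1), one obtains $q(a) f(b,c) = f(ab, ac)$ and $f(b,c) q(a) = f(ba, ca)$. So the problem reduces to showing $f(ab,ac)=0$ and $f(ba,ca)=0$.

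The hypothesis $f(a,b)=0$ for all $b \in K$ is exactly the statement that $a \in K^{\perp}$, where we regard $K$ as an ideal of itself in the notation of Lemma \ref{formulas}(3). Since $K$ is trivially both a left and a right ideal of $K$, Lemma \ref{formulas}(3) gives that $K^{\perp}$ is simultaneously a left ideal and a right ideal. In particular $ab \in K^\perp$ (using the right-ideal property) and $ba \in K^\perp$ (using the left-ideal property), so by definition of $K^\perp$ we get $f(ab,ac)=0$ and $f(ba,ca)=0$, which is part (a).

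For part (b), the identity $q(a)f(b,c)=0$ from (a) together with the assumption that $q(a)$ is not a zero-divisor forces $f(b,c)=0$ for all $b,c \in K$. Then $q(b+c)=q(b)+q(c)+f(b,c)=q(b)+q(c)$, so $q$ is additive, and combined with Definition 1.1(a) it is a ring homomorphism. Finally, evaluating the vanishing of $f$ at $(1,1)$ yields $0 = f(1,1) = q(2 \cdot 1_K) - 2 q(1_K) = 4 \cdot 1_L - 2 \cdot 1_L = 2 \cdot 1_L$, so $\characteristic L = 2$.

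There is no serious obstacle here; the whole proof is essentially the observation that the hypothesis puts $a$ into the two-sided ideal $K^\perp$, after which part (a) is immediate from Lemma \ref{formulas}(1) and part (b) is a short deduction. The only point to be slightly careful about is invoking both the left- and right-ideal halves of Lemma \ref{formulas}(3) to get the second equality $f(b,c)q(a)=0$.
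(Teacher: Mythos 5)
Your proof is correct and follows essentially the same route as the paper: both arguments place $a$ in the two-sided ideal $K^{\perp}$ via Lemma \ref{formulas}(3), deduce $q(a)f(b,c)=f(ab,ac)=0$ and $f(b,c)q(a)=f(ba,ca)=0$ from Lemma \ref{formulas}(1), and then for (b) use the non-zero-divisor hypothesis to kill $f$ and compare $q(2\cdot 1_K)=4\cdot 1_L$ with additivity to get $2\cdot 1_L=0$.
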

\begin{proof}
By assumption $a \in K^{\perp}$, which is an ideal by \ref{formulas}. Thus for all $b,c \in K$ we have
$ab,ac,ba,ca \in K^{\perp}$ and so $0 = f(ab,ac) =q(a) f(b,c) $ and
$0 =f(ba,ca) =f(b,c) q(a)$. Thus 1. follows. If $q(a) $ is not a zero-divisor, 
then $f$ is identically zero, thus $q$ is a ring homomorphism. Since $4 
\cdot 1_L = q(2 \cdot 1_K) = 2 \cdot 1_L$ holds in $L$, we get 
$\characteristic L =2$.
\end{proof}
If $F$ is a commutative, associative ring such that $K$ and $L$ are $F$-algebras, 
we say that $q$ is $F$-quadratic if $f$ is $F$-bilinear and $q(\lambda a) =\lambda^2 q(a)$ for all $a \in K$ and 
$\lambda\in F$. Note that $q$ is always $\mathbb{Z}$-quadratic. If $K$ and $L$ are fields, 
then they have 
the same characteristic (since $q(n \cdot 1_K) =n^2 \cdot 1_L$ for all $n \in \mathbb{N}$), and one can easily 
see that 
$q$ is $F$-quadratic with $F=\mathbb{Q}$ or $F=\mathbb{F}_p$ for a prime $p$. \\
From now on $L$ is associative and commutative. Suppose that $F$ is a subring of $L$ such that 
$K$ is a $F$-algebra and $q$ is $F$-quadratic. Moreover, suppose that every finitely generated submodule of  
$K$ is a free $F$-module. Set $\tilde{K}= L \otimes_F K$. Then $\tilde{K}$ is a $L$-module.   
For $a \in K$ and $\lambda \in L$ we write $\lambda a$ instead of $\lambda \otimes a$.
We define ${\tilde q}: {\tilde K} \to L$ by 
$${\tilde q}(\sum_{i=1 }^n \lambda_i a_i) = \sum_{i=1}^n  \lambda_i^2 q(a_i) + \sum_{1 \leq i <j \leq n} 
\lambda_i \lambda_j f(a_i,a_j)$$
for $a_1, \ldots, a_n \in K$ and $\lambda_1, \ldots, \lambda_n \in L$
and set $\tilde{f}(a,b) =\tilde{q}(a+b)-\tilde{q}(a)-\tilde{q}(b)$ for $a,b \in \tilde{K}$.
Then
\begin{prop}\label{tensor}
${\tilde q}$ is a multiplicative $L$-quadratic map.
\end{prop}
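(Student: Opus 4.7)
The plan has two stages: first, verify that the formula defining $\tilde{q}$ descends from formal sums to the tensor product $\tilde{K}=L\otimes_F K$; second, check the three conditions of Definition 1.1 together with the $L$-quadratic property.

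For well-definedness, I would check invariance of the defining formula under the three defining relations of $L\otimes_F K$: $(\lambda+\lambda')\otimes a=\lambda\otimes a+\lambda'\otimes a$, $\lambda\otimes(a+a')=\lambda\otimes a+\lambda\otimes a'$, and $(\lambda\mu)\otimes a=\lambda\otimes(\mu a)$ for $\mu\in F$. The first produces a spurious term $\lambda\lambda' f(a,a)$ when $(\lambda+\lambda')^2 q(a)$ is split, but this is absorbed by $2\lambda\lambda'q(a)$ since $f(a,a)=q(2a)-2q(a)=2q(a)$. The second is immediate from biadditivity of $f$. The third is where the $F$-quadratic hypothesis enters: $(\lambda\mu)^2 q(a)=\lambda^2 q(\mu a)$ and $\mu f(a,a_j)=f(\mu a,a_j)$. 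Alternatively, one can invoke the freeness hypothesis to write every element of $\tilde{K}$ uniquely with respect to a chosen $F$-basis of a finitely generated submodule containing the relevant $a_i$'s, and define $\tilde{q}$ on those canonical expressions; basis-independence then follows from the same $F$-quadratic identities.

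The remaining conditions other than multiplicativity are essentially inspection. Normalization $\tilde{q}(n\cdot 1_{\tilde{K}})=n^2\cdot 1_L$ follows from $q(1_K)=1_L$ and $1_{\tilde{K}}=1\otimes 1_K$. The scaling law $\tilde{q}(\lambda x)=\lambda^2\tilde{q}(x)$ for $\lambda\in L$ is read off the defining formula by multiplying every $\lambda_i$ by $\lambda$. The associated form satisfies
$$\tilde{f}\Bigl(\sum_i\lambda_i a_i,\sum_j\mu_j b_j\Bigr)=\sum_{i,j}\lambda_i\mu_j f(a_i,b_j),$$
which is manifestly $L$-bilinear, and in particular biadditive, so condition (c) of Definition 1.1 holds.

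The main obstacle is multiplicativity. Writing $x=\sum_i\lambda_i a_i$ and $y=\sum_k\mu_k b_k$, both $\tilde{q}(xy)$ and $\tilde{q}(x)\tilde{q}(y)$ expand into long sums indexed by pairs. The pure squares $(\lambda_i\mu_k)^2 q(a_ib_k)$ match via the multiplicativity of $q$. The single-index cross terms are handled by Lemma \ref{formulas}(a), which gives $q(a_i)f(b_k,b_{k'})=f(a_ib_k,a_ib_{k'})$ and $q(b_k)f(a_i,a_{i'})=f(a_ib_k,a_{i'}b_k)$. The genuinely doubled cross terms $\lambda_i\lambda_{i'}\mu_k\mu_{k'}f(a_i,a_{i'})f(b_k,b_{k'})$ appearing in $\tilde{q}(x)\tilde{q}(y)$ are split by Lemma \ref{formulas}(b) into $f(a_ib_k,a_{i'}b_{k'})+f(a_ib_{k'},a_{i'}b_k)$, which are exactly the two cross terms in $\tilde{q}(xy)$ arising from the unordered pairs $\{(i,k),(i',k')\}$ and $\{(i,k'),(i',k)\}$ of distinct summands of $xy$. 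The bookkeeping is combinatorial but straightforward once this dictionary between the two expansions is recognized.
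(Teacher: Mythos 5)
Your proposal is correct and follows essentially the same route as the paper: well-definedness from the freeness hypothesis, a direct computation showing $\tilde f$ is the $L$-bilinear extension of $f$, and multiplicativity by matching the expansion of $\tilde q(x)\tilde q(y)$ against that of $\tilde q(xy)$, using Lemma \ref{formulas}(a) for the single-index cross terms and Lemma \ref{formulas}(b) to split $f(a_i,a_{i'})f(b_k,b_{k'})$ into the two corresponding cross terms of $\tilde q(xy)$. The only cosmetic difference is that the paper writes $x$ and $y$ over a common spanning set $a_1,\ldots,a_n$ and organizes the final bookkeeping with an explicit order on index pairs.
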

\begin{proof} 
Since every finitely generated $F$-module of $K$ is free, one can prove by a standard argument that 
$\tilde{q}$ is well-defined.
 Let $a_1, \ldots, a_n \in K, \lambda_1, \ldots, \lambda_n,$ \\ $ \mu_1, \ldots,
  \mu_n \in L$ and  
 $x =\sum_{i=1}^n  \lambda_i a_i, y=\sum_{j=1}^n \mu_j a_j \in {\tilde K}$.
Then $${\tilde f}(x,y) = 
\tilde{q}(\sum_{i=1}^n (\lambda_i +\mu_i) a_i) - 
\tilde{q}(\sum_{i=1}^n \lambda_i a_i) -\tilde{q}(\sum_{i=1}^n \mu_i a_i) = $$
$$\sum_{i=1}^n (\lambda_i+\mu_i)^2 q(a_i) +\sum_{i<j} (\lambda_i +\mu_i)(\lambda_j +\mu_j) f(a_i,a_j) -$$
$$\sum_{i=1}^n \lambda_i^2 q(a_i) -\sum_{i<j} \lambda_i \lambda_j f(a_i,a_j)
 -\sum_{i=1}^n \mu_i^2 q(a_i) -\sum_{i<j} \mu_i \mu_j f(a_i,a_j)=$$
$$\sum_{i=1}^n 2\lambda_i \mu_i q(a_i) +\sum_{ i < j}  (\lambda_i \mu_j+\mu_i \lambda_j) f(a_i,a_j)=$$
$$\sum_{i=1}^n \lambda_i \mu_i f(a_i,a_i) +\sum_{ i<j } (\lambda_i \mu_j+\lambda_j \mu_i) f(a_i,a_j) =
\sum_{i,j=1}^n \lambda_i \mu_j f(a_i,a_j).$$
Thus ${\tilde f}$ is $L$-bilinear. By definition $\tilde{q}(\lambda a) = \lambda^2 \tilde{q}(a)$ for 
all $a \in \tilde{K}$ and all $\lambda \in L$.
It remains to show that ${\tilde q}$ is multiplicative. 
We define an ordering $\subset$ on $\{1, \ldots, n\} \times \{1, \ldots, n\}$ by 
$(i,j) \subset (k,l)$ iff $i < k$ or $i=k$ and $j <l$.
Using \ref{formulas} we get 
$${\tilde q}(x) {\tilde q}(y) =
(\sum_{i=1}^n \lambda_i^2 q(a_i) + \sum_{ i <k } 
\lambda_i \lambda_k f(a_i,a_k))(\sum_{j=1}^n \mu_j^2 q(a_j) 
 + \sum_{j <l } \mu_j 
\mu_l f(a_j,a_l))=$$
$$\sum_{i,j=1}^n \lambda_i^ 2 \mu_j^2 q(a_i)q( a_j) + \sum_{i=1}^n \sum_{ j <l }
 \lambda_i^2 \mu_j \mu_l q(a_i)f( a_j, a_l) + $$
$$\sum_{i <k } \sum_{j=1}^n \lambda_i \lambda_k \mu_j^2 f(a_i ,a_k )q(a_j) +
\sum_{i <k } \sum_{ j<l } \lambda_i \lambda_k \mu_j \mu_l f(a_i,a_k) 
f(a_j,a_l)=$$
$$\sum_{i,j=1 }^n \lambda_i^2 \mu_j^2 q(a_i a_j) + \sum_{i=1} \sum_{ j <l } \lambda_i^2 \mu_j \mu_l f(a_i a_j,a_i a_l) +
\sum_{i <k } \sum_{j=1}^n \lambda_i \lambda_k \mu_j^2 f(a_i a_j,a_k a_j) $$
$$+\sum_{i <k } \sum_{j<l } \lambda_i \lambda_k \mu_j \mu_l (f(a_i a_j, a_k a_l)+ 
f(a_i a_l, a_ka_j))=$$
$$\sum_{i,j=1}^n \lambda_i^2 \mu_j^2 q(a_i a_j) + \sum_{i=1}^n \sum_{ j <l } 
\lambda_i^2 \mu_j \mu_l 
f(a_i a_j,a_i a_l) +
\sum_{ i <k } \sum_{j=1}^n \lambda_i \lambda_k \mu_j^2 f(a_i a_j,a_k a_j)+ $$
$$\sum_{i <k  } \sum_{j \ne l} \lambda_i \lambda_k \mu_j \mu_l f(a_i a_j ,a_k a_l)=
\sum_{i,j=1}^n \lambda_i^2 \mu_j^2 q(a_i a_j) +$$
 $$\sum_{(i,j) \subset (k,l)}  \lambda_i \lambda_k \mu_j \mu_l f(a_i a_j ,
a_k a_l).$$
But we have 
$$xy= \sum_{i,j=1}^n  \lambda_i \mu_j a_i a_j $$ and thus 
$${\tilde q}(xy)  =\sum_{i,j=1}^n \lambda_i^2 \mu_j^2 q(a_i a_j) + \sum_{(i,j) \subset (k,l)} 
\lambda_i \mu_j \lambda_k \mu_l f(a_i a_j,a_k a_l) ={\tilde q}(x) {\tilde q}(y).$$
\end{proof}
\bigskip \\
\begin{proofof1.2}
By \ref{formulas}.5. we may assume that $q$ is non-degenerate. 
If $char L =p >0$, then $q(p\cdot 1_K) =p^2 \cdot 1_L=0$ and $f(p \cdot 1_K, a) =
p f(1_K,a)=0$ for all 
$a \in K$, so 
$char K =p$ as well.
If $char L =0$, then $q(n \cdot 1_K) =n^2 \cdot
 1_L \ne 0$ for all natural numbers $n \geq 0$, so 
$char K =0$ as well. So $K$ and $L$ have the same characteristic.    
Set $F=\mathbb{Z}$ if $\characteristic K =\characteristic L =0$ and $F=\mathbb{F}_p$ if $\characteristic K =\characteristic L 
=p$.
 Using \ref{tensor}, we get a multiplicative $L$-quadratic map 
 $\tilde{q}: L \otimes_F K \to L$. Thus 
$\tilde{q}$ is a quadratic form of the $L$-vectorspace $L \otimes_F K$. Set $M:= L \otimes_F K 
/rad(\tilde{q})$ and let $N:M \to L: N(a+rad(\tilde{q}))=\tilde{q}(a)$.
Then by \ref{formulas}.5. $N$ is well-defined. 
Since $rad(N) =0$, by \ref{qhomomorphism}.2. either $\characteristic L =2$ and 
$N$ is a ring homomorphism or the bilinear form associated to $N$ 
is non-degenerate. In the first case $q$ is a homomorphism as well. 
Since $f$ is identically zero and $rad(q)=\{0\}$, $q$ must be injective. 
\\
In the second case $M$ is a composition algebra over $L$ with norm $N$.
 Set $\varphi:K \to M: x \mapsto 1 \otimes x + rad(N)$, Since $rad(q)=0$, $\varphi$ is an 
embedding of $K$ in $M$ with $N(x^{\varphi}) =q(x)$ for all $x \in K$.
\end{proofof1.2}
\bigskip \\
\begin{proofofcoro}
If $K$ is a field, then in the first case of \ref{qmaps} $M$ must be a commutative 
composition algebra. Thus by 1.6.2 of \cite{SV} we have either $M =L$ 
and $N(x) =x^2$ for $x \in M$ 
or $M \cong L \times L $ and $N(x_1,x_2)=x_1 x_2$ for $(x_1,x_2) \in M$ or $M$ is a separable 
quadratic extension of $L$ and $N(x)=x x^{\sigma}$ for $x \in M$ with $\sigma $ the non-
trivial 
Galois automorphism of $M$. In the first case we have $q(x) =(x^{\varphi})^2$ for an embedding 
$\varphi_: K \to L$, in the second case there are embeddings $\varphi_1,\varphi_2:K \to L$ 
with $q(x) =x^{\varphi_1} x^{\varphi_2}$, in the last case there is an embedding 
$\varphi:K \to M$ with $q(x)=x^{\varphi} x^{\varphi \sigma}$ for $x \in K$.  
\\
In the second case of \ref{qmaps} we have $q(a) =a^{\varphi} a^{\varphi}$ with $\overline{L}$
 the algebraic closure of $L$
and $\varphi_: K \to \overline{L}:a \mapsto \sqrt{q(a)}$. 
\end{proofofcoro}

\section{The uniqueness of products of field homomorphisms}
We will now deal with the uniqueness of the representations of maps of the form 
$$x \mapsto \prod_{i=1}^n x^{\sigma_i}$$
with $\sigma_1, \ldots, \sigma_n:K \to L$ field homomorphisms. In general, such a representation is not unique. 
For example, let $L=K$ be a field of characteristic $2$, let $\mathbf{p}$ be the Frobenius endomorphism and 
set $\sigma_1 =\sigma_2 =\sigma_3:= \mathbf{p}$, $\tau_1 =\mathbf{p}^2$ and $\tau_2=\tau_3 =id$. Then we have 
$\prod_{i=1}^3 x^{\sigma_i} =x^6 =\prod_{i=1}^3 x^{\tau_i}$. 
\begin{lemma}\label{3.1} Suppose that $K$ and $L$ are fields and $\sigma_1,\ldots, \sigma_n:K \to L $ are monomorphisms. 
If ${\sum_{g \in S_n} \prod_{i=1}^ n x_i^{\sigma_{ig}} =0}$ for all $x_1, \ldots, x_n \in K$, then 
$\characteristic K = p > 0$ and there are $1 \leq i_1 < \ldots < i_p  \leq n$ with $\sigma_{i_1} = \ldots = \sigma_{i_p}$. 
\end{lemma}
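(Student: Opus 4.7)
The sum
\[
P(x_1, \ldots, x_n) := \sum_{g \in S_n} \prod_{i=1}^n x_i^{\sigma_{ig}}
\]
is the permanent of the matrix $(x_i^{\sigma_j})_{1 \le i, j \le n}$. The plan is to prove the lemma by induction on $n$, expanding the permanent along one variable at each step and invoking Artin's theorem on the linear independence of distinct monomorphisms $K \to L$.

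Group the $\sigma_i$ by equality: let $\rho_1, \ldots, \rho_k$ denote the distinct monomorphisms occurring among $\sigma_1, \ldots, \sigma_n$, with multiplicities $m_1, \ldots, m_k$. Sorting the permutations $g \in S_n$ by the value $j = ng$ and observing that deleting any one of the $m_l$ indices $j'$ with $\sigma_{j'} = \rho_l$ leaves the same multiset of $n-1$ monomorphisms, one obtains the expansion
\[
P(x_1, \ldots, x_n) \;=\; \sum_{l=1}^k m_l \, x_n^{\rho_l} \, P^{(l)}(x_1, \ldots, x_{n-1}),
\]
where $P^{(l)}$ is the analogous permanent associated to the list obtained by removing one copy of $\rho_l$. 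Assuming $P \equiv 0$ and fixing $x_1, \ldots, x_{n-1} \in K$, Artin's theorem applied to the distinct monomorphisms $\rho_1, \ldots, \rho_k$ (with $x_n$ varying over $K^*$) forces $m_l \cdot P^{(l)}(x_1, \ldots, x_{n-1}) = 0$ in $L$ for every $l$ and every $(x_1, \ldots, x_{n-1}) \in K^{n-1}$.

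The induction on $n$ is then immediate. The base case $n=1$ is vacuous, since $x_1 \mapsto x_1^{\sigma_1}$ is not identically zero. For the inductive step, suppose towards contradiction that the conclusion fails, i.e.\ either $\characteristic K = 0$, or $\characteristic K = p > 0$ but every $m_l < p$. Since monomorphisms preserve characteristic, $\characteristic L = \characteristic K$, and in either case $m_l \ne 0$ in $L$ for every $l$. The displayed identity therefore forces $P^{(l)} \equiv 0$ for all $l$. But the reduced list underlying $P^{(l)}$ still has every multiplicity strictly below $p$ (or we remain in characteristic $0$), so the inductive hypothesis applied to $P^{(l)}$ yields the desired contradiction.

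The main conceptual step, and therefore the main obstacle, is setting up the permanent expansion in exactly the right form so that the multiplicities $m_l$ emerge as scalar prefactors to which the characteristic hypothesis can be applied via Artin's theorem. Once this identity is in place, the induction on $n$ is automatic and no genuine technical difficulty remains.
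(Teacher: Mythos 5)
Your proof is correct and follows essentially the same route as the paper's: expand the permanent along $x_n$, apply Artin's linear independence of characters so that the multiplicities appear as integer coefficients which must vanish in $L$, and induct on $n$. The only difference is organizational — you group the distinct monomorphisms $\rho_1,\ldots,\rho_k$ symmetrically and argue by contrapositive, whereas the paper singles out $\sigma_n$ and splits into cases according to whether the coefficient $a_n(x_1,\ldots,x_{n-1})$ vanishes identically — but the underlying mechanism is identical.
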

\begin{proof} We have 
$$\sum_{g \in S_n} \prod_{i=1}^n x_i^ {\sigma_{ig}} = \sum_{i=1}^n (\sum_{g \in S_n, ng =i} \prod_{j=1}^{n-1} 
x_j^ {\sigma_{jg}}) x_n^{\sigma_i}.$$
Set $a_i(x_i,\ldots, x_{n-1}):=\sum_{g \in S_n,ng =i} \prod_{j =1}^{n-1} x_j^{\sigma_{jg}}$. Then 
$$\sum_{g \in S_n} \prod_{i=1}^n x_i^{\sigma_{ig}} =\sum_{i=1}^n a_i(x_1, \ldots, x_{n-1}) x_n^{\sigma_i}.$$
If $a_n(x_1, \ldots, x_{n-1})=0$ for all 
$x_1, \ldots, x_{n-1} \in K$, then the claim follows by induction. 
Suppose that there are $x_1, \ldots, x_{n-1}$ such that $a_n(x_1, \ldots, x_{n-1}) \ne 0$. 
We may assume that there is $0 \leq m < n$ with 
$\sigma_i \ne \sigma_n$ for $i \leq m$ and  
$\sigma_i= \sigma_n$ for $i > m$. Then 
$$0=  \sum_{g \in S_n} \prod_{i=1}^n x_i^ {\sigma_{ig}} =\sum_{i=1}^{m} a_i(x_1, \ldots, 
x_{n-1}) x_n^{\sigma_i} +$$
$$ (n-m) a_n(x_1, \ldots, x_{n-1}) x_n^{\sigma_n}.$$
By the Theorem of Artin (\cite{A}, III 1, Satz 13) this implies $$(n-m)
a_n(x_1, \ldots, x_{n-1}) =0,$$ thus $\characteristic K = p >0$ and $p$ divides $n-m$. 
\end{proof}
\bigskip\\
\begin{proof1.4}
We prove the claim by induction on $n+m$. 
For $x_1, \ldots, x_n \in K$, $\emptyset \ne J \subseteq \{1, \ldots, n\}$ set $x_J =\sum_{i \in J} x_j$ and 
$$f(x_1, \ldots, x_n) := \sum_{\emptyset \ne J \subseteq \{1, \ldots, n\}} (-1)^{|J|} \prod_{i=1}^n x_J^{\sigma_i}
= \sum_{\emptyset \ne J \subseteq \{1, \ldots, n\}} (-1)^{|J|} \prod_{i=1}^m x_J^{\tau_i}.$$
Then 
$$f(x_1, \ldots, x_n) = \sum_{g \in S_n} \prod_{i=1}^n x_i^ {\sigma_{ig}}=\sum_{g \in S_n} \prod_{i=1}^n x_i^ {\tau_{ig}}$$
if $m=n$ and 
$$f(x_1, \ldots, x_n) = \sum_{g \in S_n} \prod_{i=1}^n x_i^ {\sigma_{ig}}=0$$
if $m<n$. 
Again, set $a_i(x_1, \ldots, x_{n-1}) = \sum_{g \in S_n, ng =i} \prod_{j=1}^{n-1} x_j^{\sigma_{ig}}$. If $m=n$, we set 
$b_i(x_1, \ldots, x_{n-1}) = \sum_{g \in S_n, ng =i} \prod_{j=1}^{n-1} x_j^{\tau_{ig}}$. 
If $m<n$, we set $b_i(x_1, \ldots, x_{n-1})$ $ =0$. In both cases we have  
$$\sum_{i=1}^n a_i (x_1, \ldots, x_{n-1}) x_n^{\sigma_i} = f(x_1, \ldots, x_n) = \sum_{i=1}^m b_i(x_1, \ldots, x_{n-1})x_n^{\tau_i}.$$
 First suppose $\characteristic K =0$ or $\characteristic K =p$ and there are no $p$ identical $\sigma_i$, then by \ref{3.1} there are 
 $x_1, \ldots, x_n$ such that $f(x_1, \ldots, x_n) \ne 0$. Again by Artin (\cite{A}, III 1, Satz 13) we get $m=n$ and there are $i,j$ with 
 $\sigma_i =\tau_j$. We may assume $i=j=n$. Then we have 
$\prod_{i=1}^{n-1} x^{\sigma_i} = \prod_{i=1}^{n-1} x^{\tau_i}$ for all $x \in K$ and thus 1. holds by induction.  
\\
Now suppose that $char K =char L =p$ and that
$\sigma_n = \sigma_{n-1} = \ldots =\sigma_{n-p+1}$. Then we have
$$a^{\sigma_1}  \ldots  a^{\sigma_{n-p} }a^{\sigma_n \mathbf{p}} =a^{\tau_1}  \ldots  a^{\tau_m} $$
for all $a \in K$ with $\mathbf{p}$ the Frobenius endomorphism. Hence for all $1 \leq j \leq m$  
there is a $l \in \mathbb{Z}$ with 
$-m+1 \leq l(p-1) \leq n-p$ such that there is an $1 \leq i \leq n-p$ with 
$\tau_j =\sigma_i \mathbf{p}^l$ or $\tau_j =\sigma_n \mathbf{p}^{l+1}$. 
In the last case we replace $l$ by $l+1$ and get that there is an integer $l$ with
$-m+1 \leq l(p-1)\leq n-1$ and $1 \leq i \leq n$ with $ \tau_j = \sigma_i \mathbf{p}^l$.
\\
We have to prove that for $1 \leq i \leq n$ there is an integer $l$ with $-n+1 \leq l(p-1) \leq m-1$ and 
$1 \leq j \leq m$ with $\sigma_i = \tau_j \mathbf{p}^l$. This follows directly for $i \leq n-p$. 
Moreover there is an integer $l$ with $-n+p \leq l(p-1) \leq m-1$ and 
$1 \leq j \leq m$ with $\sigma_n \mathbf{p}= \tau_j \mathbf{p}^l$ and hence $\sigma_n =\tau_j \mathbf{p}^{l-1}$. 
If we replace $l$ by $l-1$, the claim follows.  
\end{proof1.4}

\end{document}